%2multibyte Version: 5.50.0.2890 CodePage: 1254
%------------------------------------------------------------------------------
% Beginning of journal.tex
%------------------------------------------------------------------------------
% AMS-LaTeX version 2 sample file for journals, based on amsart.cls.
%        ***     DO NOT USE THIS FILE AS A STARTER.      ***
%        ***  USE THE JOURNAL-SPECIFIC *.TEMPLATE FILE.  ***
% Replace amsart by the documentclass for the target journal, e.g., tran-l.
%    Absolute value notation
%    Blank box placeholder for figures (to avoid requiring any
%    particular graphics capabilities for printing this document).
%\setcounter{MaxMatrixCols}{10}
%\theoremstyle{remark}
%\input{tcilatex}

\documentclass[preprint,12pt]{elsarticle}
%%%%%%%%%%%%%%%%%%%%%%%%%%%%%%%%%%%%%%%%%%%%%%%%%%%%%%%%%%%%%%%%%%%%%%%%%%%%%%%%%%%%%%%%%%%%%%%%%%%%%%%%%%%%%%%%%%%%%%%%%%%%%%%%%%%%%%%%%%%%%%%%%%%%%%%%%%%%%%%%%%%%%%%%%%%%%%%%%%%%%%%%%%%%%%%%%%%%%%%%%%%%%%%%%%%%%%%%%%%%%%%%%%%%%%%%%%%%%%%%%%%%%%%%%%%%
\usepackage{eurosym}
\usepackage{amssymb}
\usepackage{amsmath}
\usepackage{amsthm}
\usepackage{graphicx,comment}
\usepackage{wrapfig}
\usepackage{color}
\usepackage{enumitem}

\newcommand{\beq}{\begin{eqnarray*}}
\newcommand{\eeq}{\end{eqnarray*}}
\newtheorem{theorem}{Theorem}[section]
\newtheorem{cor}{Corollary}[section]
\newtheorem{lemma}[theorem]{Lemma}

\numberwithin{equation}{section}

\begin{document}
\begin{frontmatter}

\title{Quenching estimates for a non-Newtonian filtration equation with singular
boundary conditions}
\author{Matthew A. Beauregard$^{a}$ and Burhan Selcuk$^{b}$}
\address{$^a$Department of Mathematics and Statistics, Stephen F. Austin State University, Nacogdoches, TX, 75962, USA
\\
$^b$Department of Computer Engineering, Karabuk University, Bal\i
klarkayas\i \ Mevkii, 78050, Turkey
}
\begin{abstract}
In this paper, the quenching behavior of the non-Newtonian filtration equation $(\phi (u))_{t}=(\left \vert u_{x}\right \vert ^{r-2}u_{x})_{x}$ with singular boundary conditions, $u_{x}\left( 0,t\right) =u^{-p}(0,t)$, $u_{x}\left( a,t\right) =(1-u(a,t))^{-q}$ is investigated. Various conditions on the initial condition are shown to guarantee quenching at either the left or right boundary. Theoretical quenching rates and lower bounds to the quenching time are determined when $\phi(u)=u$ and $r=2$. Numerical experiments are provided to illustrate and provide additional validation of the theoretical estimates to the quenching rates and times.
\end{abstract}

\begin{keyword}
non-Newtonian filtration equation, singular boundary condition, quenching
\end{keyword}

% AMS - 65K20, 65M50, 65Q, 65N, 35K65, 35B40

\end{frontmatter}

\section{Introduction}

\noindent \noindent In this paper, we study the quenching behavior of the
following nonlinear heat equation with singular boundary conditions:
\begin{equation}
\left \{
\begin{array}{l}
(\phi (u))_{t}=(\left \vert u_{x}\right \vert ^{r-2}u_{x})_{x},\text{\ }%
0<x<a,\ 0<t<T, \\
u_{x}\left( 0,t\right) =u^{-p}(0,t),\ u_{x}\left( a,t\right)
=(1-u(a,t))^{-q},\ 0<t<T, \\
u\left( x,0\right) =u_{0}\left( x\right) ,\ 0\leq x\leq a,%
\end{array}%
\right.  \label{NN_EqSBC}
\end{equation}
where $\phi (s)\ $is an appropriately smooth and strictly monotone
increasing function with $\phi (0)=0,\phi (1)=1\ $and$\  \phi ^{\prime
}(s)\leq 0$. $p$, $q\ $are positive constants, $r\geq 2$ and $T\leq \infty$ and the initial function $u_{0}(x)$ is a non-negative smooth function satisfying the compatibility conditions:
\begin{equation*}
u_{0}^{\prime }\left( 0\right) =u_{0}^{-p}(0),\ u_{0}^{\prime }\left(
a\right) =(1-u_{0}(a))^{-q}\text{.}
\end{equation*}
In the case, $\phi (u)=u^{1/m}\ (0<m<1)$, Eq.~(\ref{NN_EqSBC}) is known as
the classical non-Newtonian filtration equation that attempts to model non-stationary fluid flow through a porous medium where the tangential stress of the fluid's displacement velocity, $u$, has a power dependence under thermodynamic expansion and compression as a result of heat transfer \cite{Li_2012, Mi_1, Wang}. The singular boundary flux terms represent a nonlinear radiation law at the boundary and is common to polytropic filtration equations \cite{Li_2008,Li_2012,Mi_1,Wang}. This mathematical model may exhibit finite-time quenching, defined as a time $T=T(u_{0})<\infty$ such that
\begin{equation*}
\underset{t\rightarrow T^{-}}{\lim }\min \{u(x,t):0\leq x\leq a\}
\rightarrow 0\  \text{or }\underset{t\rightarrow T^{-}}{\lim }\max
\{u(x,t):0\leq x\leq a\} \rightarrow 1\text{.}
\end{equation*}
In the following, the quenching time of Eq.~(\ref{NN_EqSBC}) is denoted as $%
T$.

As is well known, when $\phi(u)=u\ $and $r=2$, the equations reduce to the heat equation. In \cite{Selcuk_2015} Selcuk and Ozalp considered the problem:
\begin{equation}
\left \{
\begin{array}{l}
u_t=u_{xx},\text{\ }0<x<a,\ 0<t<T, \\
u_{x}\left( 0,t\right) =u^{-p}(0,t),\ u_{x}\left( a,t\right)
=(1-u(a,t))^{-q},\ 0<t<T, \\
u\left( x,0\right) =u_{0}\left( x\right) ,\ 0\leq x\leq a,%
\end{array}%
\right.  \label{HeatEqSBC}
\end{equation}

It shown that if $u_{0}$ satisfies $u_{xx}(x,0)\leq 0$ then $\underset{t\rightarrow T^{-}}{\lim }u(0,t)\rightarrow 0$ and $u_{t}(0,t)$ blows up in finite time and the quenching location is at $x=0$.  Likewise, it was shown that if $
u_{0}$ satisfies $u_{xx}(x,0)\geq 0$ then quenching will occur at $x=a$.

In this paper, new estimates are derived for quenching rates.  In addition, we provide necessary conditions that guarantee quenching at one of the boundary locations for a more general $\phi(u)$ and $r\geq 2$.

In the following, the initial condition may satisfy either of the two conditions:
\begin{eqnarray}
u_{xx}(x,0) &\geq &0,0<x<a,\  \mbox{or}  \label{uxxpos} \\
u_{xx}(x,0) &\leq &0,0<x<a,  \label{uxxneg}
\end{eqnarray}
and the following condition:
\begin{equation}
u_{x}(x,0)\geq 0,0<x<a.  \label{uxpos}
\end{equation}
These assumptions will be shown to guarantee that quenching will occur in finite time.

Quenching problems have a long history in applied mathematics
literature, dating back to pioneering work of Kawarada \cite{Kawarada_1975},
which examines the one-dimensional heat equation with a nonlinear source
term with Dirichlet boundary conditions. The Kawarada equations and
extensions have been a subject of interest of both numerical \cite{Beauregard_2015, Sheng_2003, Sheng_2012} and theoretical \cite{Chan_1994,
Chan_2011,Chan_2001, Selcuk_2014, Ozalp_2015, Zhi_2007}. In many situations, the location that quenching occurs may be difficult to obtain. Here, the situation of a
singular boundary condition enables theoretical predications to happen since
the quenching location is known based on simple requirements on the initial
conditions.

Chan and Yuen \cite{Chan_2001} investigated a slightly different left boundary condition:
\begin{equation*}
\begin{array}{l}
u_{t}=u_{xx},\  \text{in}\  \Omega , \\
u_{x}\left( 0,t\right) =(1-u(0,t))^{-p},\ u_{x}\left( a,t\right)
=(1-u(a,t))^{-q},\ 0<t<T, \\
u\left( x,0\right) =u_{0}\left( x\right) ,\ 0\leq u_{0}\left( x\right) <1,\
\text{in }\overset{\_}{D},%
\end{array}%
\end{equation*}
where $a,p,q>0,T\leq \infty ,D=(0,a),\Omega =D\times (0,T)$. In \cite{Chan_2001}, they showed that $x=a$ is the unique quenching point in finite
time if $u_{0}$ is a lower solution, and $u_{t}$\ blows up at quenching
time. In \cite{Selcuk_2014}, Selcuk and Ozalp considered the problem
\begin{equation*}
\begin{array}{l}
u_{t}=u_{xx}+(1-u)^{-p},\ 0<x<1,\ 0<t<T, \\
u_{x}\left( 0,t\right) =0,\ u_{x}\left( 1,t\right) =-u^{-q}(1,t),\ 0<t<T, \\
u\left( x,0\right) =u_{0}\left( x\right) ,\ 0<u_{0}\left( x\right) <1,\
0\leq x\leq 1.%
\end{array}%
\end{equation*}
They showed that $x=0\ $is the quenching point in finite time, $\lim
\nolimits_{t\rightarrow T^{-}}u(0,t)\rightarrow 1$, if $u\left( x,0\right) \
$satisfies $u_{xx}(x,0)+\left( 1-u(x,0)\right) ^{-p}\geq 0$ and $%
u_{x}(x,0)\leq 0$.$\ $Further they showed that$\ u_{t}\ $blows up at
quenching time. Furthermore, they obtained a quenching rate and a lower
bound for the quenching time. In \cite{Li_2012}, Li and et.al. considered
the quenching problem for non-Newtonian filtration equation with a singular
boundary condition
\begin{equation}
\left \{
\begin{array}{l}
(\psi (u))_{t}=(\left \vert u_{x}\right \vert ^{r-2}u_{x})_{x},\text{\ }%
0<x<1,\ 0<t<T, \\
u_{x}\left( 0,t\right) =0,\ u_{x}\left( 1,t\right) =-g(u(1,t)),\ 0<t<T, \\
u\left( x,0\right) =u_{0}\left( x\right) ,\ 0\leq x\leq 1,%
\end{array}%
\right.  \label{NN_Li}
\end{equation}
where $\psi (u)$ is a monotone increasing function with $\psi
(0)=0,p>1,g(u)>0,g^{\prime }(u)<0$ for $u>0$, and $\lim_{u\rightarrow
0^{+}}g(u)=\infty $. They showed that $x=1\ $is the only quenching point in
finite time under proper conditions, Further, they obtained a quenching rate
and gave an example of an application of their results.

In this paper, the quenching problem, Eq.~(\ref{NN_EqSBC}), exhibits two
types of singularity terms; the boundary outflux terms$\ u^{-p}\ $and $%
(1-u)^{-q}\ $as Eq.~(\ref{HeatEqSBC}). Motivated by problems (\ref{HeatEqSBC}) and (\ref{NN_Li}), we investigate the quenching behavior of Eq.~(\ref{NN_EqSBC}). Further, in such case, several questions remain open for Eq.~(%
\ref{HeatEqSBC})\ in \cite{Selcuk_2015}, in particular:

\begin{enumerate}
\item What are the quenching rates?
\item What are the estimated quenching times?
\end{enumerate}

This paper is arranged as follows. In Section 2, it will be shown that the
solution quenches in finite time\ $T$ and $\underset{t\rightarrow T^{-}}{%
\lim }u(a,t)\rightarrow \infty \ $or $\underset{t\rightarrow T^{-}}{\lim }%
u(a,t)-\infty $ blows up at quenching time at the only quenching point $x=a\
$or $x=0\ $under the conditions (\ref{uxxpos})\ or (\ref{uxxneg}),
respectively, for $r>2$. In Section 3, quenching rates are obtained of the
solution near the quenching time for $\phi (u)=u\ $and$\ r=2$. Lower bounds
to the are then given. Section 4 details the development of the finite
difference numerical approximation. Section 4 provides numerical experiments
that provide experimental validation to our theoretical results shown in
Section 3. We highlight our main results in Section 4.%
%and discusses future extensions.

\section{Quenching for the non-Newtonian filtration equation}

Firstly, we rewrite Eq. (\ref{NN_EqSBC}) into the following form:
\begin{equation}
\left \{
\begin{array}{l}
u_{t}=B(u)(\left \vert u_{x}\right \vert ^{r-2}u_{x})_{x},\text{\ }0<x<a,\
0<t<T, \\
u_{x}\left( 0,t\right) =u^{-p}(0,t),\ u_{x}\left( a,t\right)
=(1-u(a,t))^{-q},\ 0<t<T, \\
u\left( x,0\right) =u_{0}\left( x\right) ,\ 0\leq x\leq a,%
\end{array}%
\right.   \label{NN_HeatEqSBC}
\end{equation}
where $r\geq 2$, $B(u)=1/\phi ^{\prime }(u)\ $and $\phi ^{\prime }(u)\neq 0\
$for $u>0$.

\bigskip

\begin{lemma}
~
\label{LemmaquenchforNN}
\begin{enumerate}[label=(\alph*)]
\item Assume that (\ref{uxpos}) holds. Then, $u_{x}(x,t)>0$ in $(0,a)\times (0,T_{0})$.

\item Assume that (\ref{uxxneg}) holds. Then, $u_{t}(x,t)<0$ in $(0,a)\times (0,T_{0})$.

\item Assume that (\ref{uxxpos}) holds. Then, $u_{t}(x,t)>0$ in $(0,a)\times (0,T_{0})$.
\end{enumerate}
\end{lemma}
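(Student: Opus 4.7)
My plan is to establish all three parts by applying the parabolic maximum principle to suitable derivatives of $u$. I first note that the boundary conditions force $u_x(0,t)=u^{-p}(0,t)>0$ and $u_x(a,t)=(1-u(a,t))^{-q}>0$ throughout $[0,T_0)$, so the equation is non-degenerate near the boundary. In parts (b) and (c), the monotonicity of $u_x(\cdot,0)$ implied by the sign of $u_{xx}(\cdot,0)$, combined with the positive boundary values, forces $u_x(\cdot,0)\ge\min\{u_0(0)^{-p},(1-u_0(a))^{-q}\}>0$ on $[0,a]$, so the non-degenerate regime $u_x>0$ may be assumed in all three parts.

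For (a), set $v=u_x$. Where $v>0$, the PDE reads $u_t=(r-1)B(u)v^{r-2}u_{xx}$; differentiating in $x$ yields
\begin{equation*}
v_t=(r-1)B(u)v^{r-2}v_{xx}+\bigl[(r-1)(r-2)B(u)v^{r-3}v_x+(r-1)B'(u)v^{r-1}\bigr]v_x,
\end{equation*}
a (possibly degenerate) parabolic equation for $v$. The initial datum satisfies $v(x,0)\ge 0$ by (\ref{uxpos}), while the boundary values $v(0,t),v(a,t)$ are strictly positive. The weak maximum principle gives $v\ge 0$, and the strong maximum principle together with Hopf's lemma at the boundary upgrades this to $v>0$ on $(0,a)\times(0,T_0)$.

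For (b) and (c), set $w=u_t$. Using $\partial_t(|u_x|^{r-2}u_x)=(r-1)|u_x|^{r-2}w_x$ and differentiating the PDE in $t$, I obtain
\begin{equation*}
w_t=(r-1)B(u)|u_x|^{r-2}w_{xx}+(r-1)(r-2)B(u)|u_x|^{r-3}\mathrm{sgn}(u_x)u_{xx}w_x+\frac{B'(u)}{B(u)}w^{2},
\end{equation*}
which is linear in $w$ once the final term is read as $\tfrac{B'(u)}{B(u)}w\cdot w$. Differentiating the boundary conditions in $t$ produces the homogeneous Robin conditions
\begin{equation*}
w_x(0,t)+p\,u^{-p-1}(0,t)\,w(0,t)=0,\qquad w_x(a,t)-q\,(1-u(a,t))^{-q-1}\,w(a,t)=0.
\end{equation*}
At $t=0$, $w(x,0)=(r-1)B(u_0)(u_0')^{r-2}u_0''(x)$ has the same sign as $u_0''(x)$, so (\ref{uxxneg}) gives $w(x,0)\le 0$, delivering (b), and (\ref{uxxpos}) gives $w(x,0)\ge 0$, delivering (c).

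The main obstacle is the Hopf-type argument at the Robin boundary. In case (b), if $w$ attained a nonnegative maximum at $(0,t^*)$ with $t^*>0$, Hopf's lemma would force $w_x(0,t^*)<0$, while the Robin identity yields $w_x(0,t^*)=-p\,u^{-p-1}(0,t^*)\,w(0,t^*)\ge 0$, a contradiction; the analogous argument rules out a zero maximum at $x=a$, and the strong interior maximum principle excludes an interior zero, provided $u_{xx}(\cdot,0)\not\equiv 0$. Case (c) is entirely symmetric. A secondary issue is the degeneracy of the $v$-equation when $r>2$, handled by restricting to $\{v>0\}$ and by the standard approximation of non-Newtonian filtration equations by non-degenerate parabolic regularizations.
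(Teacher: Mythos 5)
Your decomposition is exactly the paper's: set $v=u_x$ (the paper calls it $z$) and $w=u_t$, derive the parabolic equations they satisfy, and invoke the maximum principle. The paper gives the same PDEs (in a slightly different but equivalent form) and simply says ``From the Maximum Principle, it follows that $z>0$'' and ``$w<0$.'' What you add is the detail the paper glosses over: the observation that in parts (b) and (c) the compatibility conditions together with the monotonicity of $u_x(\cdot,0)$ force $u_x(\cdot,0)$ to be bounded below by a positive constant, so that the degenerate coefficient $|u_x|^{r-2}$ is actually non-degenerate; the explicit Robin boundary conditions for $w$; and the Hopf/boundary-point argument needed to get strict sign conclusions from Robin data. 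You also correctly flag a hypothesis the paper leaves implicit, namely $u_{xx}(\cdot,0)\not\equiv 0$; without it $u_0$ could be a linear steady state with $u_t\equiv 0$, and the strict inequalities in the lemma would fail.

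One small slip: in the Hopf step for part (b) you write that the Robin identity gives $w_x(0,t^*)=-p\,u^{-p-1}(0,t^*)\,w(0,t^*)\ge 0$ at a nonnegative maximum. With $w(0,t^*)\ge 0$ this expression is $\le 0$, not $\ge 0$, so the claimed contradiction with Hopf's $w_x(0,t^*)<0$ does not follow for a \emph{strictly} positive boundary maximum. The argument is salvageable by running it at the first time $t^*$ at which $\max_x w(\cdot,t)$ reaches $0$: there $w(0,t^*)=0$, the Robin identity gives $w_x(0,t^*)=0$, and Hopf (applicable since the max is zero, so the sign of the zeroth-order coefficient $\tfrac{B'(u)}{B(u)}w$ is harmless) gives $w_x(0,t^*)<0$, the desired contradiction. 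The same adjustment is needed at $x=a$ and in part (c). With that correction your proof is sound and follows the paper's route, only more explicitly.
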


\begin{proof}
~
\begin{enumerate}[label=(\alph*)]
\item Let $z(x,t)=u_{x}(x,t)$. Then, $z(x,t)$ satisfies
\begin{equation*}
\begin{array}{l}
z_{t}=B(u)(\left \vert z\right \vert ^{r-2}z)_{xx}+B^{\prime }(u)z(\left
\vert z\right \vert ^{r-2}z)_{x},\text{\ }0<x<a,\ 0<t<T_{0}, \\
z\left( 0,t\right) =u^{-p}(0,t),\ z\left( a,t\right) =(1-u(a,t))^{-q},\
0<t<T_{0}, \\
z\left( x,0\right) =u_{0}^{^{\prime }}(x)\text{.}%
\end{array}%
\end{equation*}
From the Maximum Principle, it follows that $z>0$ and hence $u_{x}(x,t)>0$
in $(0,a)\times (0,T_{0})$.

\item Let $w(x,t)=u_{t}(x,t)$. Then, $w(x,t)$ satisfies on $0<x<a$ and $0<t<T_0$:
\begin{equation*}
\begin{array}{l}
w_{t}=B^{\prime }(u)(\left \vert u_{x}\right \vert
^{r-2}u_{x})_{x}w+(r-1)B(u)(\left \vert u_{x}\right \vert ^{r-2}w_{x})_{x},
\end{array}%
\end{equation*}
and
\begin{equation*}
\begin{array}{l}
w_{x}\left( 0,t\right) =-pu^{-p-1}(0,t)w(0,t),\ 0<t<T_{0}, \\
w_{x}\left( a,t\right)
=q(1-u(a,t))^{-q-1}w\left( a,t\right) ,\ 0<t<T_{0}, \\
w\left( x,0\right) =B(u_{0}\left( x\right) )\left( \left \vert
u_{0}^{^{\prime }}(x)\right \vert ^{r-2}u_{0}^{^{\prime }}(x)\right)
_{x},0\leq x\leq a.
\end{array}
\end{equation*}
From the Maximum Principle, it follows that $w<0$ and hence $u_{t}(x,t)<0$ in $(0,a)\times (0,T_{0})$.

\item Similarly, $u_{0}(x)$ assumes (\ref{uxxpos}), then from the
above proof we have $u_{t}(x,t)>0$ in $(0,a)\times (0,T_{0})$. The proof is
complete.
\end{enumerate}
\end{proof}

\begin{theorem}
~
\label{quenchforNN}
\begin{enumerate}[label=(\alph*)]
 \item Assume that (\ref{uxxneg}) and (\ref{uxpos}) hold. Then, the solution $u$ of Eq.~(\ref{NN_HeatEqSBC}) quenches at time $T$. Then quenching occurs only at the boundary $x=0$ and $u_{t}(0,t)$ blows up at the quenching time.

\item Assume that (\ref{uxxpos}) and (\ref{uxpos}) hold. Then, $u_{t}(x,t)>0$ in $(0,a)\times (0,T_{0})$ and there exists a finite time $T$, such that the solution $u$ of Eq.~(\ref{NN_HeatEqSBC}) quenches at time $T$. Then quenching occurs only at the boundary $x=a$ and $u_{t}(a,t)$ blows up at the quenching time.
\end{enumerate}
\begin{proof}
~
\begin{enumerate}[label=(\alph*)]
\item Assume that (\ref{uxxneg}) holds. Then, by Lemma \ref{LemmaquenchforNN}(b), we get $u_{t}(x,t)<0$ in $(0,a)\times (0,T_{0})$. In addition, by (\ref{uxxneg}):
\begin{equation*}
\omega _{3}=-(1-u\left( a,0\right) )^{-q(r-1)}+u^{-p(r-1)}\left( 0,0\right)
>0\text{.}
\end{equation*}
We shall introduce a mass function:
$$ m_{3}\left( t\right) =\displaystyle \int_{0}^{a}\phi (u\left(
x,t\right) )dx,0<t<T.$$
Then
\begin{equation*}
m_{3}^{\prime }\left( t\right) =(1-u\left( a,t\right)
)^{-q(r-1)}-u^{-p(r-1)}\left( 0,t\right) \leq -\omega _{3},
\end{equation*}
by $u_{t}(x,t)<0\ $in $(0,a)\times (0,T_{0})$. Thus, $m_{3}\left( t\right)
\leq m_{3}(0)-\omega _{3}t$; which means that $m_{3}\left( T_{0}\right) =0\ $%
for some $T_{0},(0<T\leq T_{0})$ which means $u$ quenches in finite time.

Since $r\geq 2$, $\phi (u)\ $is an increasing function, $u_{t}(x,t)<0$ and $%
u_{x}(x,t)>0\ $in $(0,a)\times (0,T_{0})$, we get
\begin{eqnarray*}
(\phi (u))_{t}=(\left \vert u_{x}\right \vert ^{r-2}u_{x})_{x}&\rightarrow&
\phi ^{\prime }(u)u_{t}=(r-1)u_{x}^{r-2}u_{xx}\\
&\rightarrow& u_{xx}=\frac{\phi
^{\prime }(u)u_{t}}{(r-1)u_{x}^{r-2}}<0.
\end{eqnarray*}
Namely, $u_{x}\ $is a decreasing function and since $u_{x}(a,t)=(1-u(a,t))^{-q}>1$, $u_{x}(x,t)>1\ $in $(0,a)\times (0,T)$. Let $\eta \in (0,a)$. Integrating this with respect to $x$ from $0$ to $\eta $,
we have
\begin{equation*}
u(\eta ,t)>u(0,t)+\eta >0\text{.}
\end{equation*}
So $u\ $does not quench in $(0,a]$.

Suppose that $u_{t}\ $is bounded in $[0,a)\times \lbrack 0,T)$. Then there
is a positive constant $M$, $u_{t}>-M$. Therefore,
\begin{equation*}
B(u)(\left \vert u_{x}\right \vert ^{r-2}u_{x})_{x}>-M.
\end{equation*}
Because of $\phi ^{\prime \prime }(s)<0$, $\phi ^{\prime }(s)\ $is
not increasing. So, there are $\sigma \ $and $\tau ,\ $which make $0<\tau
\leq v<1\ $in $[0,\sigma ]\times \lbrack 0,T)$, thus, $B(u)=\frac{1}{\phi
^{\prime }(u)}\geq B(\tau )$. Thus,
\begin{eqnarray*}
(\left \vert u_{x}\right \vert ^{r-2}u_{x})_{x} &>&\frac{-M}{B(u)}\geq \frac{%
-M}{B(\tau )}, \\
(u_{x}^{r-1})_{x} &>&\frac{-M}{B(\tau )},
\end{eqnarray*}
from $u_{x}(x,t)>0\ $in $(0,a)\times (0,T_{0})$. Integrating this with
respect to $x$ from $0$ to $a$, we have
\begin{equation*}
(1-u(a,t))^{-(r-1)q}-u^{-(r-1)p}(0,t)>\frac{-Ma}{B(\tau )}\text{.}
\end{equation*}
As $t\rightarrow T^{-}$, the left-hand side tends to negative infinity,
while the right-hand side is finite. This contradiction shows that $u_{t}$
blows up at the quenching time for $x=0$.

\item Similarly, assume that (\ref{uxxpos}) and (\ref{uxpos}) hold. From (a), we have quenching occurs only at the boundary $x=a$ and $u_{t}$ blows up at the quenching time for $x=a$. The proof is therefore complete.
\end{enumerate}
\end{proof}
\end{theorem}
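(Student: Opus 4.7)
My plan is to combine the monotonicity statements of Lemma~\ref{LemmaquenchforNN} with a mass-type integral identity obtained from the PDE, in order to (i) force finite-time quenching, (ii) pin the quenching location to a single boundary, and (iii) rule out bounded $u_{t}$ by contradiction using the singular boundary fluxes. Throughout, part~(b) will be handled symmetrically to part~(a), so I focus on the latter.

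For part~(a), I would invoke Lemma~\ref{LemmaquenchforNN}(a),(b) to secure $u_{x}>0$ and $u_{t}<0$. The spatial monotonicity immediately localises any quenching to $x=0$, since $u(\cdot,t)$ attains its minimum there. To exhibit finite-time quenching I would introduce
\[
m(t)=\int_{0}^{a}\phi(u(x,t))\,dx,
\]
differentiate under the integral, and use the PDE to reduce $m'(t)$ to the flux balance $(1-u(a,t))^{-q(r-1)}-u^{-p(r-1)}(0,t)$. Since $u_{t}<0$, both $t\mapsto u(a,t)$ and $t\mapsto u(0,t)$ are decreasing, so $m'(t)$ is dominated by its value at $t=0$. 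The key sign check is that (\ref{uxxneg}) forces this initial value to be strictly negative: integrating $u_{xx}(x,0)\le 0$ across $[0,a]$ gives $u_{x}(a,0)\le u_{x}(0,0)$, which, after substituting the boundary conditions and raising to the $(r-1)$th power, delivers a positive constant $\omega_{3}$ with $m'(t)\le -\omega_{3}$. Together with the lower bound $m(t)\ge a\,\phi(u(0,t))$ from spatial monotonicity, this forces $u(0,t)\to 0$ at some $T<\infty$.

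The blow-up of $u_{t}(0,t)$ at $T$ I would handle by contradiction. If $u_{t}\ge -M$ on $[0,a)\times[0,T)$, the PDE yields $(|u_{x}|^{r-2}u_{x})_{x}\ge -M/B(u)$, and $B(u)$ stays bounded below once $u$ enters a small neighbourhood of $0$. Integrating $(u_{x}^{r-1})_{x}$ from $0$ to $a$ and inserting the boundary conditions produces an inequality whose left-hand side tends to $-\infty$ as $t\to T^{-}$ (because $u(0,t)\to 0$) while the right-hand side remains finite, the required contradiction. Part~(b) follows by the mirror argument: Lemma~\ref{LemmaquenchforNN}(c) supplies $u_{t}>0$, spatial monotonicity relocates the quenching site to $x=a$ with $u(a,t)\to 1$, the flux balance under (\ref{uxxpos}) now delivers a positive lower bound on $m'(t)$, and the analogous integrated inequality at the right endpoint forces $u_{t}(a,t)\to \infty$.

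The main obstacle I anticipate is the blow-up step, specifically establishing a uniform lower bound on $B(u)=1/\phi'(u)$ along the quenching sequence: the integrated inequality is informative only if $B$ does not degenerate as $u$ approaches the singular value. This is precisely where the structural hypothesis on $\phi$ (together with the normalisations $\phi(0)=0$ and $\phi(1)=1$) must be invoked, and some care is required to apply it on the correct side ($u\to 0$ in part~(a) versus $u\to 1$ in part~(b)). The mass functional and the identification of the quenching location are by comparison routine once the monotonicity of Lemma~\ref{LemmaquenchforNN} is in hand.
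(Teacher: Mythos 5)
Your overall strategy matches the paper's: establish sign conditions via Lemma~\ref{LemmaquenchforNN}, use the mass functional $m(t)=\int_0^a\phi(u)\,dx$ with the boundary fluxes to derive $m'(t)\le -\omega_3$ and force finite-time quenching, then run a contradiction with the assumed bound $u_t\ge -M$ to extract blow-up of $u_t(0,t)$. Your derivation of $\omega_3\ge 0$ by integrating $u_{xx}(x,0)\le 0$ and raising to the $(r-1)$th power is in fact a cleaner justification than the paper's bare assertion, and your lower bound $m(t)\ge a\,\phi(u(0,t))$ is a helpful explicitation. The blow-up step and the role of the concavity of $\phi$ (bounding $B(u)=1/\phi'(u)$ below) are also handled correctly in spirit, though note the bound is needed over all of $[0,a]$, not just near $x=0$; concavity gives $B(u)\ge 1/\phi'(0)$ uniformly, and boundedness of $(1-u(a,t))^{-q(r-1)}$ follows from $u_t<0$ keeping $u(a,t)\le u(a,0)<1$.

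There is, however, one genuine gap: the assertion that ``spatial monotonicity immediately localises any quenching to $x=0$, since $u(\cdot,t)$ attains its minimum there.'' Monotonicity $u_x>0$ only gives $u(\eta,t)\ge u(0,t)$, which does not preclude $u(\eta,t)\to 0$ at an interior point $\eta>0$ simultaneously with $u(0,t)\to 0$; the minimum being at $x=0$ shows $x=0$ is a quenching point, not that it is the \emph{only} one. The paper closes this by showing $u_x>1$ throughout: from the PDE, $u_{xx}=\phi'(u)u_t/((r-1)u_x^{r-2})<0$ (using $u_t<0$, $u_x>0$, $\phi'>0$), so $u_x$ is decreasing in $x$ and hence $u_x(x,t)\ge u_x(a,t)=(1-u(a,t))^{-q}>1$. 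Integrating then yields the uniform lower bound $u(\eta,t)>u(0,t)+\eta\ge\eta>0$, which rules out quenching at any $\eta\in(0,a]$. Without this concavity-plus-boundary-flux step, the localisation claim does not follow. The rest of your argument goes through once this is inserted.
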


\section{Quenching rates of the heat equation}

In this section, we investigate the case where $\phi(u)=u$ and $r=2$ and determine quenching rates and lower bounds to the quenching time under certain conditions on the initial condition in Eq.~(\ref{HeatEqSBC}). In the following we may either assumption on the spatial derivative of the initial condition:
\begin{eqnarray}
u_{x}(x,0)\geq \frac{x}{a}(1-u(x,0))^{-q}, &&~~0<x<a,~~\mbox{or}
\label{uxcond1} \\
u_{x}(x,0)\geq \frac{(a-x)}{a}u^{-p}(x,0), &&~~0<x<a.  \label{uxcond2}
\end{eqnarray}

\begin{theorem}
\label{quenchestrightwall} If $u_{0}(x)$ satisfies condition (\ref{uxxpos}),
that is, the initial condition is not concave down, then there exists a
positive constant $C_{1}$ such that
\begin{equation*}
u(a,t)\leq 1-C_{1}(T-t)^{1/(2q+2)},
\end{equation*}
for\ $t$ sufficiently close to the quenching time $T$.
\end{theorem}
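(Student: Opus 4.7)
The plan is to derive the pointwise lower bound $u_t(a,t) \geq \epsilon(1-u(a,t))^{-(2q+1)}$ for some $\epsilon>0$ and $t$ close to $T$, then integrate the resulting differential inequality at $x=a$. The main obstacle is controlling the auxiliary function at the singular right boundary; the natural choice $J = u_t - \epsilon(1-u)^{-(2q+1)}$ fails there because Hopf's lemma turns out to be automatically consistent with a negative boundary minimum. The remedy is to build the boundary condition $u_x(a,t) = (1-u(a,t))^{-q}$ directly into the auxiliary function by working with $u_x$ rather than $1-u$.

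Set $\beta = (2q+1)/q$ and
\begin{equation*}
J(x,t) = u_t(x,t) - \epsilon\,u_x(x,t)^{\beta}.
\end{equation*}
The exponent is selected so that $u_x(a,t)^{\beta} = (1-u(a,t))^{-(2q+1)}$, making $J(a,t)\geq 0$ the precise bound we want. A direct calculation using $u_t = u_{xx}$ (so $u_{tt} = u_{xxt}$ and $u_{xt} = u_{xxx}$) gives
\begin{equation*}
J_t - J_{xx} = \epsilon\beta(\beta-1)\,u_x^{\beta-2}\,u_{xx}^2 \geq 0,
\end{equation*}
strictly positive since $u_x>0$ by Lemma~\ref{LemmaquenchforNN}(a) and, for any $\tau>0$, $u_{xx} = u_t > 0$ on $[0,a]\times[\tau,T)$ via the strong maximum principle applied to the linear parabolic equation for $w = u_t$ that appears in the proof of Lemma~\ref{LemmaquenchforNN}(c).

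Fix a small $\tau>0$. By compactness there exist $m,M>0$ with $u_t(\cdot,\tau)\geq m$ and $u_x(\cdot,\tau)^{\beta}\leq M$ on $[0,a]$; pick $\epsilon$ small enough that $\epsilon\leq m/M$ (so $J(x,\tau)\geq 0$) and $\epsilon\beta<q$. Any negative minimum of $J$ on $[0,a]\times[\tau,T)$ must then sit on the lateral boundary by the parabolic minimum principle. Differentiating the flux conditions in $t$ and substituting gives
\begin{align*}
J_x(0,t) &= u_t(0,t)\Bigl[-p\,u^{-p-1}(0,t) - \epsilon\beta\,u^{-p(\beta-1)}(0,t)\Bigr] < 0, \\
J_x(a,t) &= u_t(a,t)\,(1-u(a,t))^{-(q+1)}\bigl[q-\epsilon\beta\bigr] > 0,
\end{align*}
where the second line uses $u_x(a,t)^{\beta-1} = (1-u(a,t))^{-(q+1)}$ and $u_{xt}(a,t) = q(1-u(a,t))^{-q-1} u_t(a,t)$. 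Hopf's lemma at a hypothetical left-boundary minimum demands $J_x(0,t_0)>0$ and at a right-boundary minimum demands $J_x(a,t_0)<0$; both are contradicted, so $J\geq 0$ throughout $[0,a]\times[\tau,T)$.

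Evaluating at $x=a$ gives $u_t(a,t)\geq\epsilon(1-u(a,t))^{-(2q+1)}$, which rearranges to $-\frac{d}{dt}(1-u(a,t))^{2q+2}\geq\epsilon(2q+2)$. Integrating from $t$ to $T$ and using $u(a,T)=1$ yields $(1-u(a,t))^{2q+2}\geq\epsilon(2q+2)(T-t)$, and taking $(2q+2)$-th roots produces $u(a,t)\leq 1 - C_1(T-t)^{1/(2q+2)}$ with $C_1 = [\epsilon(2q+2)]^{1/(2q+2)}$.
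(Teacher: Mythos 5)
Your proof is correct, and it takes a genuinely different route from the paper. The paper defines the auxiliary function $M = u_t - \delta q(1-u)^{-q-1}u_x$ and asserts (citing \cite{Selcuk_2015}) that $M_t - M_{xx} > 0$ and that $M(0,t) > 0$, $M(a,t) > 0$, so the maximum principle gives $M \geq 0$. You instead set $J = u_t - \epsilon\,u_x^{\beta}$ with $\beta = (2q+1)/q$, chosen precisely so that $J(a,t) \geq 0$ is the desired boundary bound $u_t(a,t) \geq \epsilon(1-u(a,t))^{-(2q+1)}$. Both auxiliary functions agree (up to constant) at $x=a$ because $u_x(a,t) = (1-u(a,t))^{-q}$ there, but they differ in the interior. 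Your choice buys two things: the parabolic inequality collapses to the single clean term $J_t - J_{xx} = \epsilon\beta(\beta-1)u_x^{\beta-2}u_{xx}^2 \geq 0$, and the lateral boundary is handled by computing $J_x(0,t)$ and $J_x(a,t)$ (via differentiating the flux conditions in $t$ and using $u_{xx} = u_t$) and invoking Hopf's lemma, rather than by asserting $M(0,t) > 0$ and $M(a,t) > 0$ outright. The latter assertion in the paper is delicate at $x = a$ because $(1-u(a,t))^{-2q-1} \to \infty$ as $t \to T^-$, which makes positivity of $M(a,t)$ essentially what one is trying to prove; your Hopf computation $J_x(a,t) = (q - \epsilon\beta)(1-u(a,t))^{-(q+1)}u_t(a,t) > 0$ sidesteps this cleanly. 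One minor remark: like the paper, you use $u_x > 0$ (Lemma~\ref{LemmaquenchforNN}(a)), which requires condition (\ref{uxpos}) in addition to (\ref{uxxpos}); this is an implicit standing assumption of the section and not a defect specific to your argument.
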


\begin{proof}
Define
\begin{equation*}
M(x,t) = u_{t}-\delta q(1-u)^{-q-1}u_{x},
\end{equation*}
in $[0,a]\times \lbrack \tau ,T)$ where $\tau \in (0,T)\ $and $\delta $\ is
a positive constant to be specified later. It was shown in \cite{Selcuk_2015}%
, that since $u_{t}>0$ and $u_{x}>0$ in $\left( 0,a\right) \times (0,T)$,
then $M(x,t)$ satisfies
\begin{eqnarray*}
M_{t}-M_{xx}=\delta q(q+1)(q+2)(1-u)^{-q-3}u_{x}^{3}+2\delta
q(q+1)(1-u)^{-q-2}u_{x}u_{t}>0,
\end{eqnarray*}
for $(x,t)\in (0,a)\times (\tau ,T).$ Furthermore, if $\delta$ is small
enough then $M(x,\tau )\geq 0$ for $x\in [0,a]$, and $M(0,t)>0,M(a,t)>0$ for
$t\in \lbrack \tau ,T)$.

Therefore, by the maximum principle, we obtain that $M(x,t)\geq 0$ for $%
(x,t)\in \lbrack 0,a]\times \lbrack \tau ,T)$. This means that
\begin{equation*}
u_{t}(x,t)\geq \delta q(1-u)^{-q-1}u_{x}(x,t), ~~(x,t)\in \lbrack 0,a]\times
\lbrack \tau ,T)
\end{equation*}
Evaluating at $x=a$ yields,
\begin{equation*}
u_{t}(a,t)\geq \delta q(1-u(a,t))^{-2q-1}.
\end{equation*}%
Integrating over $t$ from $t$ to $T$ gives,
\begin{equation*}
u(a,t)\leq 1-C_{1}(T-t)^{1/(2q+2)},
\end{equation*}
where $C_{1}=\left( 2\delta q(q+1)\right) ^{1/(2q+2)}$.
\end{proof}

If we provide the additional condition on the spatial derivative of the
initial condition then we can obtain a lower bound to the value at the right
hand wall. This is encapsulated in the following theorem.

\begin{theorem}
\label{lowerbound} If $u_{0}(x)$ satisfies conditions (\ref{uxxpos}) and (%
\ref{uxcond1}) then there exist positive constant $C_{2}$ such that
\begin{equation*}
u(a,t)\geq 1-C_{2}(T-t)^{1/(2q+2)},
\end{equation*}
for $t$ sufficiently close to the quenching time $T$.
\end{theorem}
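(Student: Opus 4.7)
The strategy is to mirror Theorem~\ref{quenchestrightwall} and obtain an \emph{upper} bound of the form $u_t(a,t) \le C(1-u(a,t))^{-(2q+1)}$ for $t$ near $T$; integrating this ODE inequality in time then yields the claimed lower bound on $u(a,t)$. The natural comparison function is
\[
J(x,t) = u_x(x,t) - \tfrac{x}{a}(1-u(x,t))^{-q},
\]
chosen so that $J$ vanishes on the right lateral boundary $x=a$ (by the Neumann condition there) while being nonnegative on the rest of the parabolic boundary.

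First I would verify the parabolic boundary data of $J$: $J(x,0)\ge 0$ is exactly hypothesis (\ref{uxcond1}); $J(0,t) = u^{-p}(0,t) > 0$ by the left boundary condition; and $J(a,t) = (1-u(a,t))^{-q} - (1-u(a,t))^{-q} = 0$. Note also that (\ref{uxcond1}) implies (\ref{uxpos}), so Lemma~\ref{LemmaquenchforNN}(a) guarantees $u_x > 0$ in the interior. A direct calculation, using the cancellations $u_{xt} = u_{xxx}$ and $u_t = u_{xx}$ coming from the heat equation, then gives
\[
J_t - J_{xx} = \frac{2q}{a}(1-u)^{-q-1}u_x + \frac{x\,q(q+1)}{a}(1-u)^{-q-2}u_x^{2} \ge 0.
\]
Thus $J$ is a supersolution of the heat operator, and the weak maximum principle forces $J \ge 0$ on $[0,a]\times[0,T)$.

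Since $J$ attains its minimum value $0$ along the side $x = a$ and is not identically zero (because $J(0,t)>0$), the Hopf boundary-point lemma yields $J_x(a,t) \le 0$. Expanding $J_x(a,t)$ and substituting the boundary value $u_x(a,t) = (1-u(a,t))^{-q}$, together with $u_t(a,t) = u_{xx}(a,t)$, produces
\[
u_t(a,t) \;\le\; \tfrac{1}{a}(1-u(a,t))^{-q} + q\,(1-u(a,t))^{-(2q+1)}.
\]
For $t$ sufficiently close to $T$ one has $1 - u(a,t) < 1$, so the singular term dominates and $u_t(a,t) \le C\,(1-u(a,t))^{-(2q+1)}$ for a constant $C$ depending only on $a$ and $q$. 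Setting $v(t) = 1 - u(a,t)$, this rearranges to $\frac{d}{dt}v(t)^{2q+2} \ge -(2q+2)C$; integrating from $t$ to $T$ and using $v(T) = 0$ gives $v(t) \le C_2\,(T-t)^{1/(2q+2)}$ with $C_2 = \bigl((2q+2)C\bigr)^{1/(2q+2)}$, which is the desired estimate.

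The main obstacle is essentially algebraic: checking that every term left over in $J_t - J_{xx}$ after the $u_t = u_{xx}$ cancellation is indeed nonnegative (this relies on $u_x>0$, which is where Lemma~\ref{LemmaquenchforNN}(a) is needed). Once that is in hand, the maximum principle for $J$ and the Hopf lemma at $x=a$ are standard, and the final integration is the same ODE argument used at the end of Theorem~\ref{quenchestrightwall}.
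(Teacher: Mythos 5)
Your proposal is correct and follows essentially the same route as the paper: the same auxiliary function $J(x,t)=u_x-\frac{x}{a}(1-u)^{-q}$, the same computation of $J_t-J_{xx}$, the same maximum-principle argument using $u_x>0$, and the same concluding ODE integration in $v(t)=1-u(a,t)$. The only cosmetic deviation is that you invoke the Hopf boundary-point lemma to conclude $J_x(a,t)\le 0$, while the paper obtains this directly (and more elementarily) from the one-sided difference quotient $\lim_{h\to 0^+}\frac{-J(a-h,t)}{h}\le 0$, which suffices since only the nonstrict inequality is needed.
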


\begin{proof}
Define%
\begin{equation*}
J(x,t)=u_{x}-\frac{x}{a}(1-u)^{-q}, ~~(x,t)\in [0,a]\times \lbrack 0,T).
\end{equation*}%
Then, $J(x,t)$ satisfies
\begin{equation*}
J_{t}-J_{xx}=\frac{1}{a}\left(
2q(1-u)^{-q-1}u_{x}+xq(q+1)(1-u)^{-q-2}u_{x}^{2}\right).
\end{equation*}
$J(x,t)$ cannot attain a negative interior minimum since $u_x(x,t)>0$. On
the other hand, by our condition (\ref{uxcond1}) we have $J(x,0)\geq 0$ and
\begin{equation*}
J(0,t)=u^{-p}(0,t)>0,~~J(a,t)=0,
\end{equation*}
for\ $a\leq 1$ and $t\in (0,T)$. By the maximum principle, we obtain that $%
J(x,t)\geq 0$ for $(x,t)\in \lbrack 0,1]\times \lbrack 0,T)$. Therefore,
\begin{equation*}
J_{x}(a,t)=\underset{h\rightarrow 0^{+}}{\lim }\frac{J(a,t)-J(a-h,t)}{h}=%
\underset{h\rightarrow 0^{+}}{\lim }\frac{-J(a-h,t)}{h}\leq 0\text{.}
\end{equation*}
Subsequently,
\begin{eqnarray*}
J_{x}(a,t) &=&u_{xx}(a,t)-\frac{1}{a}(1-u(a,t))^{-q}-q(1-u(a,t))^{-2q-1} \\
&=&u_{t}(a,t)-\frac{1}{a}(1-u(a,t))^{-q}-q(1-u(a,t))^{-2q-1}\leq 0
\end{eqnarray*}
and
\begin{equation*}
u_{t}(a,t)\leq \frac{(qa+1)}{a}(1-u(a,t))^{-2q-1}.
\end{equation*}
Integrating over $t$ from $t$ to $T$ yields
\begin{equation*}
u(a,t)\geq 1-C_{2}(T-t)^{1/(2q+2)},
\end{equation*}
where $C_{2}=\left[ \frac{(qa+1)(2q+2)}{a}\right] ^{1/(2q+2)}$.
\end{proof}

\begin{cor}
\label{cor1} The results of the Theorems (\ref{quenchestrightwall}) and (\ref%
{lowerbound}) suggest as the quenching time is approached that the quenching
rate of the solution can be estimated as
\begin{equation*}
u(a,t)\thicksim 1-\left( T-t\right) ^{\dfrac{1}{2(q+1)}}\text{.}
\end{equation*}
Equivalently,
\begin{equation*}
\dfrac{\ln(1-u(a,t))}{\ln(T-t)} \thicksim \frac{1}{2(q+1)}
\end{equation*}
In addition, a lower bound for the quenching time can be calculated. From
Theorem (\ref{lowerbound}), we have
\begin{equation*}
T_{q}=\frac{a(1-u_{0}(a))^{2q+2}}{2(qa+1)(q+1)} \leq T.
\end{equation*}
\end{cor}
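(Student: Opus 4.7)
The plan is to deduce all three claims directly from the bounds already established in Theorems \ref{quenchestrightwall} and \ref{lowerbound}, together with a single ODE integration for the quenching-time estimate.

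First, I would combine the two bounds from the theorems. Under the hypotheses of Theorem \ref{lowerbound} both theorems apply, so for $t$ sufficiently close to $T$ one has
\begin{equation*}
C_{1}(T-t)^{1/(2q+2)} \leq 1 - u(a,t) \leq C_{2}(T-t)^{1/(2q+2)}.
\end{equation*}
This squeeze gives the asserted rate $u(a,t)\sim 1-(T-t)^{1/(2(q+1))}$ in the obvious sense. For the logarithmic form, I would take $\ln$ of the squeeze, obtaining
\begin{equation*}
\frac{\ln C_1}{\ln(T-t)}+\frac{1}{2(q+1)}\le \frac{\ln(1-u(a,t))}{\ln(T-t)}\le \frac{\ln C_2}{\ln(T-t)}+\frac{1}{2(q+1)},
\end{equation*}
where care is needed with the sign since $\ln(T-t)\to -\infty$. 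Sending $t\to T^-$ makes the outer terms tend to $1/(2(q+1))$, giving the stated asymptotic.

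For the lower bound on $T$, I would reuse the differential inequality derived inside the proof of Theorem \ref{lowerbound}, namely
\begin{equation*}
u_t(a,t)\le \frac{qa+1}{a}\,(1-u(a,t))^{-2q-1}.
\end{equation*}
Setting $v(t)=1-u(a,t)$, this reads $-v'(t)\le \frac{qa+1}{a} v(t)^{-2q-1}$, or equivalently $\frac{d}{dt}\!\left[v(t)^{2q+2}\right]\ge -\frac{2(qa+1)(q+1)}{a}$. Integrating from $0$ to $T$ and using $v(T)=0$ (quenching) together with $v(0)=1-u_0(a)$ yields
\begin{equation*}
-(1-u_0(a))^{2q+2}\ge -\frac{2(qa+1)(q+1)}{a}\,T,
\end{equation*}
which rearranges to the claimed $T_q\le T$.

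The calculation is routine; the only mild subtlety will be the logarithmic step, where one must verify that $\ln C_1,\ln C_2$ are finite so that dividing by $\ln(T-t)\to -\infty$ kills those terms, and that $1-u(a,t)<1$ eventually so the logarithms have the same sign as $\ln(T-t)$. No genuine obstacle is expected, since Theorems \ref{quenchestrightwall} and \ref{lowerbound} have already done all the PDE work.
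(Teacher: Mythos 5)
Your argument is correct and follows the same route the paper implicitly uses: sandwich $1-u(a,t)$ between the two theorems' power-law bounds for the rate, and integrate the differential inequality $u_t(a,t)\le \frac{qa+1}{a}(1-u(a,t))^{-2q-1}$ from $0$ to $T$ (equivalently, set $t=0$ in the conclusion of Theorem~\ref{lowerbound}) to obtain $T_q\le T$. One small note: after dividing the logged squeeze by $\ln(T-t)<0$ the roles of $C_1$ and $C_2$ swap, so your displayed inequality is written in the wrong direction; this does not affect the conclusion since both outer terms still tend to $1/(2(q+1))$.
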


In the following, we assume the initial condition satisfies condition (\ref%
{uxxneg}). This condition guarantees quenching will occur at the left
boundary, $x=0$. Hence, we seek quenching estimates to the quenching rate of
the solution.

\begin{theorem}
\label{quenchingestimateleftwall} If $u_{0}(x)$ satisfies condition $(\ref%
{uxxneg})$, that is, the initial condition is not concave up, then there
exists a positive constant $C_{3}$ such that
\begin{equation*}
u(0,t)\geq C_{3}(T-t)^{1/(2p+2)},
\end{equation*}
for $t$ sufficiently close to the quenching time $T$.
\end{theorem}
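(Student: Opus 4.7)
The plan is to mirror the maximum-principle construction of Theorem~\ref{quenchestrightwall}, with appropriate sign adjustments since here $u$ itself (rather than $1-u$) is the quantity that quenches to $0$, and the quenching site is $x=0$. I would introduce
\begin{equation*}
M(x,t) = u_t + \delta p\, u^{-p-1} u_x, \qquad (x,t)\in[0,a]\times[\tau,T),
\end{equation*}
for a small $\tau\in(0,T)$ and $\delta>0$ to be chosen later. The $+$ sign (opposite to the right-boundary case) is dictated by the fact that under hypothesis (\ref{uxxneg}) we have $u_t<0$ throughout the interior, by Lemma~\ref{LemmaquenchforNN}(b); the target is therefore $M\leq 0$, so that at $x=0$ the boundary condition $u_x(0,t)=u^{-p}(0,t)$ translates into the separable differential inequality $u_t(0,t)\leq -\delta p\, u^{-2p-1}(0,t)$.

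To apply the maximum principle I first verify that $M$ is a subsolution of the heat operator. A direct calculation using $u_t=u_{xx}$ and its consequence $u_{xt}=u_{xxx}$ should yield
\begin{equation*}
M_t - M_{xx} = 2\delta p(p+1)\, u^{-p-2} u_x u_t - \delta p(p+1)(p+2)\, u^{-p-3} u_x^3.
\end{equation*}
Now hypothesis (\ref{uxxneg}), together with the compatibility relations, forces $u_0'$ to be nonincreasing with $u_0'(a)=(1-u_0(a))^{-q}>0$, so (\ref{uxpos}) is automatic and Lemma~\ref{LemmaquenchforNN}(a) delivers $u_x>0$. Paired with $u_t<0$, both terms on the right-hand side are nonpositive, hence $M_t\leq M_{xx}$ on $(0,a)\times(\tau,T)$.

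I would then check that $M\leq 0$ on the parabolic boundary of $[0,a]\times[\tau,T)$ for suitably small $\delta$ and $\tau$. At $t=\tau$ every relevant quantity is bounded and $u(\cdot,\tau)\geq c_\tau>0$, so $M(x,\tau)\leq 0$ uniformly for $\delta$ small. At $x=a$, Theorem~\ref{quenchforNN}(a) localizes quenching to $\{0\}$, hence $u(a,t)$ stays bounded away from $0$ throughout $[\tau,T)$; the term $\delta p\, u^{-p-1}(a,t)(1-u(a,t))^{-q}$ is therefore bounded, and small $\delta$ combined with a strict negative bound for $u_t(a,t)$ (obtained from the strong maximum principle applied to $w=u_t$ in the proof of Lemma~\ref{LemmaquenchforNN}(b)) gives $M(a,t)\leq 0$. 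The analogous check at $x=0$ is more subtle because $u(0,t)\to 0$ forces both $u_t(0,t)$ and $u^{-2p-1}(0,t)$ to diverge as $t\to T^-$; I would handle this by a Hopf boundary-point analysis, computing $M_x(0,t)$ from the boundary condition and the identities $u_t(0,t)=u_{xx}(0,t)$ and $u_{xt}(0,t)=-p u^{-p-1}(0,t)u_t(0,t)$, following the strategy of \cite{Selcuk_2015}. This last verification is where I expect the main technical obstacle.

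Once $M\leq 0$ throughout $[0,a]\times[\tau,T)$ is established by the parabolic maximum principle, evaluating at $x=0$ and multiplying by $u^{2p+1}(0,t)\geq 0$ gives $\frac{d}{dt}u^{2p+2}(0,t)\leq -2\delta p(p+1)$. Integrating from $t$ to $T$ and using $u(0,T)=0$ yields $u^{2p+2}(0,t)\geq 2\delta p(p+1)(T-t)$, which is the claimed lower bound with $C_3 = \bigl(2\delta p(p+1)\bigr)^{1/(2p+2)}$.
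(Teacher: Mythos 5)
Your proposal follows the paper's construction essentially verbatim: the same auxiliary function $M(x,t) = u_t + \delta p\,u^{-p-1}u_x$, the same parabolic inequality $M_t - M_{xx}\le 0$ (with the correct signs from $u_t<0$, $u_x>0$), the same application of the maximum principle followed by integration of the resulting ordinary differential inequality at $x=0$, and the identical constant $C_3=(2\delta p(p+1))^{1/(2p+2)}$. You are, if anything, more forthcoming than the paper about the delicate step — the paper simply asserts $M(0,t)<0$ and $M(a,t)<0$ for small $\delta$ with a citation to \cite{Selcuk_2015}, whereas you flag the lateral boundary check at $x=0$ as the genuine technical obstacle and also correctly observe that (\ref{uxxneg}) together with the compatibility condition forces (\ref{uxpos}).
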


\begin{proof}
Define
\begin{equation*}
M(x,t) = u_{t}+\delta pu^{-p-1}u_{x}, ~~ (x,t) \in [0,a]\times \lbrack \tau
,T)
\end{equation*}
where $\tau \in (0,T)$ and $\delta$ is a positive constant to be specified
later. It was shown in \cite{Selcuk_2015} that since $u_{t}<0$ and $u_{x}>0$
in $\left( 0,a\right) \times (0,T)$ then $M(x,t)$ satisfies
\begin{equation*}
M_{t}-M_{xx}=-\delta p(p+1)(p+2)u^{-p-3}u_{x}^{3}+2\delta
p(p+1)u^{-p-2}u_{x}u_{t}<0,
\end{equation*}
for $(x,t)\in (0,a)\times (\tau ,T).$ Furthermore, if $\delta$ is small
enough, then $M(x,\tau )\leq 0$ for $x\in [0,a]$ and $M(0,t)<0,~M(a,t)<0$
for $t\in \lbrack \tau ,T)$. Therefore, by the maximum principle, we obtain
that $M(x,t)\leq 0$ for $(x,t)\in \lbrack 0,a]\times \lbrack \tau ,T).$
Subsequently, $u_{t}(x,t)\leq -\delta pu^{-p-1}u_{x}(x,t)$ for $(x,t)\in
\lbrack 0,a]\times \lbrack \tau ,T)$. This means, at $x=0$ we have:
\begin{equation*}
u_{t}(0,t)\leq -\delta pu^{-2p-1}(0,t).
\end{equation*}
Integrating over $t$ from $t$ to $T$ yields,
\begin{equation*}
u(0,t)\geq C_{3}(T-t)^{1/(2p+2)},
\end{equation*}
where $C_{3}=\left( 2\delta p(p+1)\right) ^{1/(2p+2)}$.
\end{proof}

\begin{theorem}
\label{lowerbound2} If $u_{0}(x)$ satisfies both (\ref{uxxpos}) and (\ref%
{uxcond2}) then there exist positive constant $C_{4}$ such that
\begin{equation*}
u(0,t)\leq C_{4}(T-t)^{1/(2p+2)},
\end{equation*}
for $t$ sufficiently close to the quenching time $T$.
\end{theorem}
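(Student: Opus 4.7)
The plan is to mirror the structure of Theorem \ref{lowerbound}, but reflect the geometry across the interval so that the auxiliary function vanishes at the quenching boundary $x=0$ (under the condition (\ref{uxxneg}), which I presume is what is meant since quenching at $x=0$ is what was established there). Specifically, I would define
\begin{equation*}
J(x,t) = u_{x} - \frac{a-x}{a}\, u^{-p}, \qquad (x,t)\in[0,a]\times[0,T).
\end{equation*}
By the boundary condition at $x=0$ we get $J(0,t)=u^{-p}(0,t)-u^{-p}(0,t)=0$, at $x=a$ we get $J(a,t)=(1-u(a,t))^{-q}>0$, and the hypothesis (\ref{uxcond2}) gives $J(x,0)\geq 0$.

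Next, I would differentiate and use $u_{t}=u_{xx}$ together with Lemma \ref{LemmaquenchforNN}(a) (so $u_{x}>0$) to compute
\begin{equation*}
J_{t}-J_{xx} = \frac{2p}{a}\,u^{-p-1}u_{x} + \frac{p(p+1)(a-x)}{a}\,u^{-p-2}u_{x}^{2} \geq 0.
\end{equation*}
Thus $J$ is a supersolution of the heat operator, cannot attain a negative interior minimum, and together with the sign data on the parabolic boundary, the maximum principle gives $J(x,t)\geq 0$ on $[0,a]\times[0,T)$.

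Since $J(0,t)=0$ is the minimum of $J$ along $x=0$, the one-sided derivative satisfies $J_{x}(0,t)\geq 0$. Expanding $J_{x}$ at $x=0$ and using $u_{t}=u_{xx}$ yields
\begin{equation*}
u_{t}(0,t) + \tfrac{1}{a}\, u^{-p}(0,t) + p\, u^{-2p-1}(0,t) \geq 0,
\end{equation*}
that is $u_{t}(0,t)\geq -\tfrac{1}{a}u^{-p}(0,t)-p\,u^{-2p-1}(0,t)$. For $t$ close enough to $T$, $u(0,t)$ is so small that $u^{-p}(0,t)=u^{p+1}(0,t)\cdot u^{-2p-1}(0,t)$ is dominated by the second term, so one obtains $u_{t}(0,t)\geq -C\,u^{-2p-1}(0,t)$ for a suitable $C>0$. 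Multiplying by $u^{2p+1}(0,t)$, integrating from $t$ to $T$, and using $u(0,T)=0$ produces $u^{2p+2}(0,t)\leq (2p+2)C(T-t)$, i.e.\ the claimed bound with $C_{4}=[(2p+2)C]^{1/(2p+2)}$.

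The main obstacle I anticipate is the absorption step just described: unlike in Theorem \ref{lowerbound}, where the auxiliary function vanished at the quenching boundary and produced a single clean singular term, here the computation of $J_{x}(0,t)$ leaves a lower-order singularity $u^{-p}$ alongside the dominant $u^{-2p-1}$. Handling this requires restricting to $t$ sufficiently close to $T$ and lumping the subdominant term into the leading constant, which is why the conclusion is only asserted as $t\to T^{-}$. The rest is a straightforward parabolic maximum principle argument and an ODE integration.
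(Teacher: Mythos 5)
Your proof follows essentially the same route as the paper: same auxiliary function $J(x,t)=u_{x}-\frac{a-x}{a}u^{-p}$, same sign data on the parabolic boundary, same maximum-principle argument yielding $J\geq 0$ and hence $J_{x}(0,t)\geq 0$, and the same final ODE integration. You are also right that the hypothesis should read (\ref{uxxneg}) rather than (\ref{uxxpos}); the latter is a typo (compare Corollary \ref{cor2} and the example that invokes it, both of which use concave-down data).

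One small correction to your commentary: the two-singularity issue you flag as the ``main obstacle'' is not actually new relative to Theorem \ref{lowerbound} and does not require a late-time asymptotic absorption. In Theorem \ref{lowerbound} the expansion of $J_{x}(a,t)$ likewise produces two terms, $(1-u)^{-q}$ and $(1-u)^{-2q-1}$, and the paper absorbs the lower-order one using simply $0<1-u<1$, so that $(1-u)^{-q}\leq(1-u)^{-2q-1}$ for \emph{all} $t$. The identical trick applies here: since $0<u(0,t)<1$ throughout, $u^{-p}(0,t)\leq u^{-2p-1}(0,t)$, giving $u_{t}(0,t)\geq -\frac{pa+1}{a}u^{-2p-1}(0,t)$ uniformly in $t$, not just near $T$. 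This yields the explicit constant $C_{4}=\bigl[\frac{(pa+1)(2p+2)}{a}\bigr]^{1/(2p+2)}$, which the paper needs for the quenching-time lower bound $T_{p}$ in Corollary \ref{cor2}. Your softer asymptotic absorption is valid but discards the explicit constant for no gain.
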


\begin{proof}
Define
\begin{equation*}
J(x,t)=u_{x}-\frac{(a-x)}{a}u^{-p},~~(x,t) \in [0,a]\times \lbrack 0,T).
\end{equation*}
Then, $J(x,t)$ satisfies
\begin{equation*}
J_{t}-J_{xx}=\frac{1}{a}\left(
2pu^{-p-1}u_{x}+(a-x)p(p+1)(1-u)^{-p-2}u_{x}^{2}\right).
\end{equation*}
Since $u_{x}>0$, then $J(x,t)$ cannot attain a negative interior minimum. On
the other hand, by the assumed condition (\ref{uxcond2}), then $J(x,0)\geq 0$
and
\begin{equation*}
J(0,t)=0,J(a,t)=(1-u(a,t))^{-q}>0,
\end{equation*}
for $t\in (0,T)$. Therefore, by the maximum principle, we obtain that $%
J(x,t)\geq 0$ for $(x,t)\in \lbrack 0,1]\times \lbrack 0,T)$. As a result,
\begin{equation*}
J_{x}(0,t)=\underset{h\rightarrow 0^{+}}{\lim }\frac{J(h,t)-J(0,t)}{h}=%
\underset{h\rightarrow 0^{+}}{\lim }\frac{J(h,t)}{h}\geq 0.
\end{equation*}
This yields
\begin{eqnarray*}
J_{x}(0,t) &=&u_{xx}(0,t)+\frac{1}{a}u^{-p}(0,t)+pu^{-2p-1}(0,t) \\
&=&u_{t}(0,t)+\frac{1}{a}u^{-p}(0,t)+pu^{-2p-1}(0,t)\geq 0
\end{eqnarray*}
and
\begin{equation*}
u_{t}(0,t)\geq -\frac{(pa+1)}{a}u^{-2p-1}(0,t).
\end{equation*}
Integrating from $t$ from $t$ to $T$ gives
\begin{equation*}
u(0,t)\leq C_{4}(T-t)^{1/(2p+2)},
\end{equation*}
where $C_{4}=\left[ \frac{(pa+1)(2p+2)}{a}\right] ^{1/(2p+2)}$.
\end{proof}

\begin{cor}
\label{cor2} The results of the Theorems (\ref{quenchingestimateleftwall})
and (\ref{lowerbound2}) suggest as the quenching time is approached that the
quenching rate of the solution is estimated as
\begin{equation*}
u(0,t)\thicksim \left( T-t\right) ^{1/(2p+2)}
\end{equation*}
Equivalently,
\begin{equation*}
\dfrac{\ln(u(0,t))}{\ln(T-t)} \thicksim \frac{1}{2(p+1)}
\end{equation*}
In addition, a lower bound for the quenching time is established from
Theorem (\ref{lowerbound2}), namely,
\begin{equation*}
T_{p}=\frac{au_{0}(0))^{2p+2}}{2(pa+1)(p+1)} \leq T.
\end{equation*}
for quenching time\ $T$.
\end{cor}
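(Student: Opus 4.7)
The plan is to derive each of the three assertions of Corollary~\ref{cor2} directly from the inequalities already established in Theorems~\ref{quenchingestimateleftwall} and~\ref{lowerbound2}, with no new technical machinery required. First, sandwiching the two bounds gives
$C_3(T-t)^{1/(2p+2)} \leq u(0,t) \leq C_4(T-t)^{1/(2p+2)}$
for $t$ sufficiently close to $T$, which immediately yields the asymptotic equivalence $u(0,t) \thicksim (T-t)^{1/(2p+2)}$. The logarithmic reformulation is then obtained by taking $\ln$ of this chain, giving
$\ln C_3 + \tfrac{1}{2(p+1)}\ln(T-t) \leq \ln u(0,t) \leq \ln C_4 + \tfrac{1}{2(p+1)}\ln(T-t),$
and dividing through by $\ln(T-t)$. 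Since $\ln(T-t) \to -\infty$ as $t\to T^-$, the additive constants $\ln C_3$ and $\ln C_4$ become negligible and both flanking quotients tend to $1/(2(p+1))$; the only point requiring care is that dividing by a negative quantity reverses the inequalities, so the direction must be tracked.

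For the lower bound on $T$, I would revisit the differential inequality derived inside the proof of Theorem~\ref{lowerbound2}, namely $u_t(0,t) \geq -\tfrac{pa+1}{a}\,u^{-2p-1}(0,t)$, which holds on $[0,T)$ thanks to the maximum-principle conclusion $J(x,t)\geq 0$ on $[0,a]\times[0,T)$. Multiplying through by the positive quantity $u^{2p+1}(0,t)$ reorganizes the inequality as
$\tfrac{d}{dt}\!\left(u^{2p+2}(0,t)\right) \geq -\tfrac{(2p+2)(pa+1)}{a}.$
Integrating from $0$ to $T$ and using $u(0,T)=0$ from Theorem~\ref{quenchforNN}(a) (quenching at $x=0$) gives $-u_0(0)^{2p+2} \geq -\tfrac{(2p+2)(pa+1)}{a}\,T$, which rearranges to $T \geq \tfrac{a\,u_0(0)^{2p+2}}{2(pa+1)(p+1)} = T_p$.

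There is no serious obstacle here; the corollary is essentially a packaging of the two theorems. The two points demanding mild attention are (i) the sign reversal when dividing by $\ln(T-t) < 0$ in the logarithmic asymptotic, and (ii) ensuring the differential inequality from Theorem~\ref{lowerbound2} is valid globally on $[0,T)$ so that integration up to $T$ is legitimate; both are handled by a careful reading of the previous proofs.
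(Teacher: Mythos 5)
Your proof is correct and is essentially the argument the paper intends; the paper itself gives no explicit proof of the corollary, treating it as an immediate packaging of Theorems~\ref{quenchingestimateleftwall} and~\ref{lowerbound2}. Your three steps — sandwiching $C_3(T-t)^{1/(2p+2)} \le u(0,t) \le C_4(T-t)^{1/(2p+2)}$, taking logarithms and dividing by $\ln(T-t)<0$ with the attendant sign reversal, and integrating the differential inequality $u_t(0,t)\ge -\tfrac{pa+1}{a}u^{-2p-1}(0,t)$ from $0$ to $T$ using $\lim_{t\to T^-}u(0,t)=0$ — are exactly the steps that recover the stated asymptotics and the constant $T_p$. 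One caveat worth flagging, though it is a defect in the paper rather than in your argument: the stated hypothesis of Theorem~\ref{lowerbound2} is (\ref{uxxpos}), which is incompatible with the hypothesis (\ref{uxxneg}) of Theorem~\ref{quenchingestimateleftwall} needed for the sandwich, and also inconsistent with quenching at $x=0$; the hypothesis of Theorem~\ref{lowerbound2} is evidently a typo for (\ref{uxxneg}), and under that correction both theorems apply simultaneously and your derivation goes through.
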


\bigskip

\subsection{Initial Conditions Examples}

It is clear, that the estimates for the quenching rates and times rely
heavily on properties of the initial condition. Here, we provide initial
functions that satisfy the boundary conditions while simultaneously
satisfying either conditions (\ref{uxxpos}) and (\ref{uxcond1}) or (\ref%
{uxxneg}) and (\ref{uxcond2}).

Consider the initial condition,
\begin{equation}  \label{ICExample1}
u_{0}(x)=\frac{1}{4}+4x+4x^{2},~~ 0\leq x\leq a.
\end{equation}
where $a=1/8$. Let $p=1$ and $q=\log_{16/3}(5)$. Since the initial condition
is concave up throughout its entire domain then clearly condition (\ref%
{uxxpos}) is satisfied. In addition, a straightforward calculation shows
that the left boundary condition is satisfied, namely,
\begin{equation*}
u_0^{\prime }(0) = 4 = \dfrac{1}{u_0(0)^p}
\end{equation*}
At the right boundary we have $u_0^{\prime }\left(\frac18\right) = 5$ and
\begin{eqnarray*}
\dfrac{1}{\left( 1 - u_0\left( \frac18 \right)\right)^q} = \left( \frac{16}{3%
} \right)^q = 5
\end{eqnarray*}
In Fig.~\ref{ICExampleBound}(a) it is seen that the condition (\ref{uxcond1}%
) is satisfied.

\begin{center}
\begin{figure}[ht]
%\begin{wrapfigure}{r}{0.65\textwidth}
\includegraphics[scale=.45]{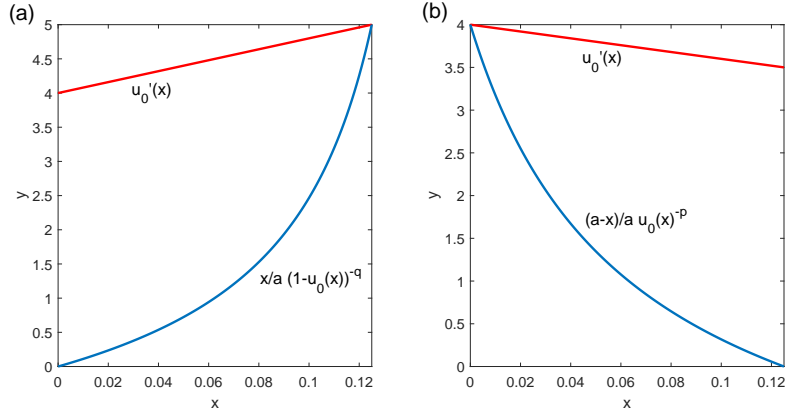} %\end{wrapfigure}
\caption{(a) A graph of $u_0^{\prime }(x)$ (RED) and $\frac{x}{a}%
(1-u_0(x))^q $ (BLUE) for $u_0(x) = \frac14 - 4x - 4x^2$. It is clear that $%
u_0^{\prime }(x) \geq \frac{x}{a}(1-u_0(x))^{-q}$ is satisfied throughout
the domain $0 \leq x \leq 1/8.$ (b) A graph of $u_0^{\prime }(x)$ (RED) and $%
\frac{a-x}{a}(u_0(x))^{-p}$ (BLUE) for $u_0(x) = \frac14 + 4x - 2x^2$. It is
clear that $u_0^{\prime }(x) \geq \frac{a-x}{a}(u_0(x))^{-p}$ is satisfied
throughout the domain $0 \leq x \leq 1/8.$}
\label{ICExampleBound}
\end{figure}
\end{center}

In light of the initial condition (\ref{ICExample1}) then, by Corollary (\ref%
{cor1}) we have a lower bound to quenching time. Namely:
\begin{eqnarray*}
T_{q}&=&\frac{ \left( 3/16\right)^{2q+2} }{16\left( \frac18 q + 1
\right)(q+1)} \approx 4.0002\times 10^{-5}.
\end{eqnarray*}

Similarly, if the initial condition is
\begin{equation}
u_{0}(x)=\frac{1}{4}+4x-2x^{2},~~0\leq x\leq a.  \label{ICExample2}
\end{equation}%
where $a=1/8$. Let $p=1$ and $q=\log _{32/9}\left( \frac{7}{2}\right) $.
Since the initial condition is concave down throughout its entire domain
then clearly condition (\ref{uxxneg}) is satisfied. It is clear that the
left boundary condition is satisfied. At the right boundary we have $%
u_{0}^{\prime }\left( \frac{1}{8}\right) =\left( 1-u_{0}\left( \frac{1}{8}%
\right) \right) ^{-q}=\frac{7}{2}.$ In Fig.~\ref{ICExampleBound}(b), we see
that condition (\ref{uxcond2}) is satisfied. Furthermore, by Corollary (\ref%
{cor2}) we have a lower bound to quenching time. Namely:
\begin{equation*}
T_{p}=\dfrac{1}{9216}\approx 1.0851\times 10^{-5}.
\end{equation*}

\section{Numerical Approximation and Experiments}

Let $x_{j}=jh$ for $j=0,\ldots ,N+1$ and $h=a/(N+1)$. Let $%
t_{k}=t_{k-1}+\tau _{k-1}$, where $\tau _{k-1}$ is the temporal step. Let $%
u_{j}(t)$ be the approximation to $u(x_{j},t)$. Define the vector $\vec{u}%
(t)=(u_{0}(t),u_{1}(t),\ldots ,u_{N}(t),u_{N+1}(t))^{\top}$, where $\vec{u}%
(0)$ is created from evaluating the initial condition at the grid points.
Central difference approximations are utilized at each grid point to create
the semidiscretized equations approximating Eq.~(\ref{HeatEqSBC}), namely,
\begin{equation}
h^{2}\dot{\vec{u}}(t)=\vec{F}(\vec{u}(t)),
\end{equation}%
where $\vec{F}=(F_{0},\ldots ,F_{N+1})$ with components defined as
\begin{equation}
F_{k}=\left \{
\begin{array}{lcl}
2u_{1}+\dfrac{2h}{(u_{0})^{p}}-2u_{0} &  & k=0 \\
u_{k-1}-2u_{k}+u_{k+1} &  & k=1,2,\ldots ,N \\
2u_{N}+\dfrac{2h}{(1-u_{N+1})^{q}}-2u_{N+1} & \hspace{5em} & k=N+1%
\end{array}%
\right.
\end{equation}%
Define $\vec{v}_{m}$ as the approximation to $\vec{u}(t)$ at time $t=t_{m}$.
Then, the solution is advanced through a second order accurate
Crank-Nicolson scheme \cite{Strikwerda}:
\begin{equation}
\vec{v}_{m+1}=\vec{v}_{m}+\mu _{m}(\vec{F}(\vec{v}_{m+1})+\vec{F}(\vec{v}%
_{m})),
\end{equation}%
where $\mu _{m}=\tau _{m}/(2h^{2})$. The scheme is overall second order
accurate, however, due to the singular boundary conditions the equations are
\textit{stiff} and it is known that unless $\tau _{k}$ is sufficiently then
the method may manifest a reduction in the order of temporal convergence
\cite{Hundsdorfer_1992}. With this in mind, we expect the method to overall
first order accurate modest temporal steps. It is common to approximate $%
\vec{v}_{m+1}$ in the right hand side by a first order Euler approximation, $%
\vec{v}_{m+1}\approx \vec{v}_{m}+2\mu _{m}\vec{F}(\vec{v}_{m})$. This
maintains the overall accuracy of the scheme will creating a semi-explicit
scheme for efficiency in computations \cite{Beauregard_2013}. The spatial
grid is fixed throughout the computation, however, adaptation may occur in
the temporal step. Temporal adaption for quenching problems is critical to
ensure accuracy in the quenching time. An arc-length monitoring function for
$\dot{\vec{u}}$ is used to adapt the temporal step. Define
\begin{equation*}
m_{i}\left( \frac{\partial u_{i}}{\partial t},t\right) =\sqrt{1+\left( \frac{%
\partial ^{2}u_{i}}{\partial t^{2}}\right) ^{2}},~~(x,t)\in \lbrack
0,a]\times (0,T]
\end{equation*}%
for $i=0,\ldots ,N+1$. The monitoring functions, $m_{i}$, monitor the
arc-length of the characteristic at node $x_{i}$. Subsequently, as quenching
is approached the temporal derivative grows beyond exponentially fast,
therefore the arc-length will grow \cite{Beauregard_2013_AAMM}. Therefore,
we choose the temporal step such that the maximal arc-length between
successive approximations at $[t_{k-2},t_{k-1}]$ and $[t_{k-1},t_{k}]$ are
equivalent. Pragmatically, this leads to the equation for the temporal step:
\begin{equation*}
\tau _{k}^{2}=\tau _{k-1}^{2}+\min_{i}\left \{ \left[ \left( \frac{\partial
u_{i}}{\partial t}\right) _{k-1}-\left( \frac{\partial u_{i}}{\partial t}%
\right) _{k-2}\right] ^{2}-\left[ \left( \frac{\partial u_{i}}{\partial t}%
\right) _{k}-\left( \frac{\partial u_{i}}{\partial t}\right) _{k-1}\right]
^{2}\right \} ,
\end{equation*}%
for $~k=2,\ldots ,$ and given the initial times steps of $\tau _{0}$ and $%
\tau _{1}$.

In the following experiments, we look to verify the second order convergence
rate of the numerical routine. Assume that $t\ll T$. Let $\vec{v}_{\tau}$ be
the approximation to $\vec{u}(\tau)$ for a fixed temporal step $\tau.$ Then,
the maximum absolute difference between the numerical solution and $\vec{u}$
at time is $\max |\vec{v}_{\tau} - \vec{u}| \approx C \tau^p$, where $C$ is
some positive constant and $p$ is the order of accuracy of the temporal
scheme. Consider creating a new approximation with a temporal step $\tau/2$,
then at each grid point,
\begin{eqnarray*}
|(\vec{v}_{\tau/2} - \vec{u})_i| &\approx & C \left(\dfrac{h}{2}\right)^p =
\frac{C h^p}{2^p} \\
&\approx& \frac{|(\vec{v}_{\tau} - \vec{u})_i|}{2^p}
\end{eqnarray*}
for $i=0,\ldots, N+1$. Rearranging, yields an expression to estimate the
order of accuracy,
\begin{equation*}
p \approx \frac{1}{\ln(2)} \ln \left(\dfrac{ |(\vec{v}_{\tau} - \vec{u})_i|
}{|(\vec{v}_{\tau/2} - \vec{u})_i|}\right)
\end{equation*}
This generates an approximate convergence rate at each grid point $x_i$. In
the majority of applications $\vec{u}$ is unknown. Hence, a numerical
solution with a relatively fine temporal step is used to estimate the rate
of the underlying cauchy sequence \cite{Padgett_2017}.

%\subsection{Quenching at $x=0$}

Consider the initial condition Eq.~\ref{ICExample1}, where $a=1/8$, $p=1$,
and $q=\log_{16/3}(5).$ We choose $\tau = 10^{-4}$ and $h=.01$. In such
case, we estimate the convergence rate of $1.013$. Therefore, a reduction in
the temporal order of convergence is manifested. To estimate the quenching
time and rates, we run the simulation with $h=.001$ and $\tau_0 = \tau_1 =
10^{-6}$. We adapt the temporal step but require $\tau_k \geq 10^{-9}$. The
quenching time is numerically determined to be approximately $T\approx
1.9037\times 10^{-3}$ which is greater than our estimated lower bound of $%
4\times 10^{-5}.$ A loglog plot of $1-u(1/8,t)$ versus $T-t$ is shown in
Fig.~\ref{QuenchingRate}(a). A least squares approximation suggests a slope
of approximately $0.253286153170844$. The theoretical estimate was
predicated to be $0.255$.

\begin{center}
\begin{figure}[ht]
%\begin{wrapfigure}{r}{0.65\textwidth}
\includegraphics[scale=.65]{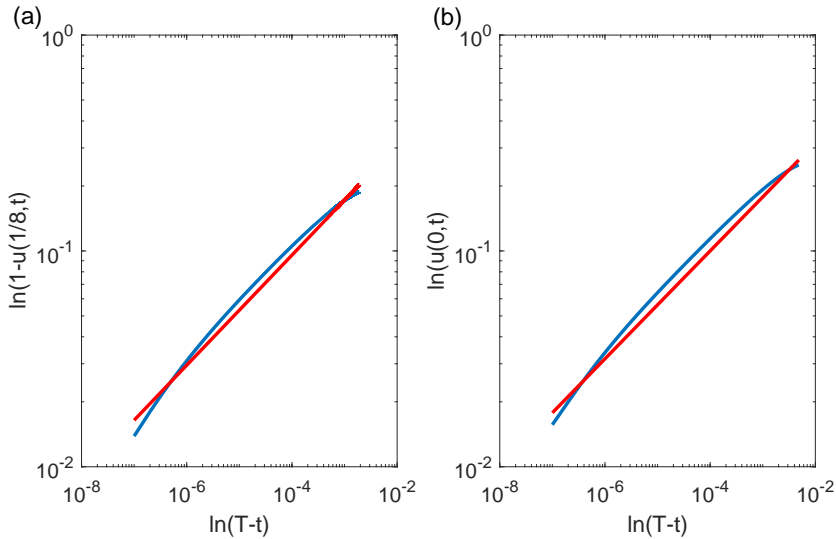} %\end{wrapfigure}
\caption{Loglog plots of the numerical observed (a) $1-u(a,t)$ and (b) $%
u(0,t)$ versus $T-t$. The red curves in each subplot provide a loglog of (a)
$(T-t)^{1/(2(q+1))}$ and (b) $(T-t)^{1/(2(p+1))}$ .}
\label{QuenchingRate}
\end{figure}
\end{center}

Next, consider the initial condition Eq.~\ref{ICExample2}, where $a=1/8$, $%
p=1$, and $q=\log_{32/9}(7/2).$ Again, we run the simulation with $h=.001$
and $\tau_0 = \tau_1 = 10^{-6}$. We adapt the temporal step but require $%
\tau_k \geq 10^{-9}$. The quenching time is numerically determined to be
approximately $T\approx \times 10^{-3}$ which is greater than our estimated
lower bound of $1.0851\times 10^{-5}.$ A loglog plot of $u(0,t)$ versus $T-t$
is shown in Fig.~\ref{QuenchingRate}(b). A least squares approximation
suggests a slope of approximately $0.244301262418202$. The theoretical
estimate was predicated to be $0.25$.

%\subsection{Quenching at $x=a$}

\section{Conclusions}

In this paper, a quenching problem with nonlinear boundary conditions are
investigated. Certain conditions on the positivity, concavity, and the first
derivative of the initial condition lead to theoretical lower bound to the
quenching time, in addition to asymptotic estimates to the quenching rate.
Numerical experiments provided additional validation of the pragmatic
application of the theoretical analysis. We found that the experimental
quenching time, $T$, was later than our predicted lower bound. Further, the
experiments suggested quenching rates that were within $1\%$ of the
predicted asymptotic quenching rates.

%\section*{References}

\end{document}